\newcommand{\bburl}[1]{\textcolor{blue}{\url{#1}}}
\newtheorem{thm}{Theorem}[section]
\newtheorem{lem}[thm]{Lemma}
\theoremstyle{definition}
\newtheorem{exa}[thm]{Example}
\theoremstyle{definition}
\newtheorem{defi}[thm]{Definition}
\theoremstyle{remark}
\newcommand\be{\begin{equation}}
\newcommand\ee{\end{equation}}
\numberwithin{equation}{section}
\author[Y. Gaur]{Yashaswika Gaur}
\email{\textcolor{blue}{\href{mailto:yashaswikagaur@gmail.com}
{yashaswikagaur@gmail.com}}}
\author[T.A. Wong]{Tian An Wong}
\email{\textcolor{blue}{\href{mailto:t.wong@columbia.edu}
{t.wong@columbia.edu}}}
\subjclass[2010]{11H06, 52C07 \and 11P21}
\keywords{Erhart polynomial, vector dilation, irrational polytope}
\date{\today}
\begin{document}

\title{Lattice points in vector-dilated quadratic irrational polytopes}

\maketitle

\begin{abstract}
We study the Ehrhart theory of quadratic irrational polytopes that undergo vector dilations. That is, for a given polytope with vertices in $\mathbb{Q}(\sqrt{D})$, and a different dilation factor for each facet, we show that the leading term of the lattice-point count behaves similar to an Ehrhart polynomial, generalizing previous work of Borda on scalar dilations of quadratic irrational polytopes. As a result, a form of the Ehrhart-Macdonald reciprocity law is obtained for the leading term.
\end{abstract}

\section{Introduction}
For a simple polygon with points with integer coordinates such that all its vertices are integers, Pick's theorem provides the following formula:
\be
 A = I+\frac{1}{2}B -1,
\ee
where $A$ is the area of the polygon, $I$ the number of lattice points in the interior, and $B$ the number of lattice points on the boundary of the polygon.

A rational polytope is defined by a system of inequalities determined by $A \in \text{Mat}_{m\times  n}(\mathbb Z)$, $ b \in \mathbb{Z}^m$, $m\ge n$ as:
\be
\label{polyt}
P = \{x \in \mathbb{R}^n: Ax \leq b\},
\ee
and a $t$-fold dilation, for a fixed $t>0$ is defined to be the set of points 
\be
tP = \{ tx : x\in P\}.
\ee
For a given $t$-dilated polytope, there exists a polynomial called the Ehrhart polynomial \cite{Eh}, that measures the volume of the polytope. This is a higher dimensional generalization of Pick's theorem in the Euclidean plane. 

A quasipolynomial, also called a pseudopolynomial, is of the form 
\be
c_n(t) t^n + \dots + c_1(t) t + c_0(t),
\ee
where $c_0, \dots ,c_n$ are periodic functions in the integer variable $t$. The key difference between a polynomial and a quasipolynomial is that the coefficients of a polynomial belong to a ring, but the coefficients of quasipolynomials are periodic functions. Then Erhart's theorem is as follows:

\begin{thm}[Ehrhart]\label{Ehrhart}
Let $P$ be a rational polytope with a $t$-fold dilation. Then the Ehrhart polynomials of $P$, denoted $L(P,t)$ is a quasipolynomial in the integer variable $t$, where the leading term of $L(P,t)$ is equal to the volume of $P$. Moreover, if $P$ is a lattice polytope then $L(P,t)$ is a polynomial in $t$. %In this case, the second leading term of $L(P,t)$ is the (relative) volume of the boundary of $P$, normalized with respect to the sublattice on each facet of $P$, and the constant term of $L(P,t)$ is the Euler characteristic of $P$. 
\end{thm}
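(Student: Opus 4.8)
The plan is to reduce to the case of a simplex and then to read $L(P,t)$ off a rational generating function built from the geometry of a cone. First I would fix a triangulation of $P$ into finitely many $n$-dimensional simplices using only the vertices of $P$, arranged so that the simplices meet along common faces. Counting lattice points in $tP$ then reduces, by inclusion--exclusion over the faces of the triangulation (or, more cleanly, by decomposing $P$ into half-open simplices), to counting lattice points in dilates of rational simplices. Hence it suffices to prove the theorem for $P=\Delta$ a rational simplex with vertices $v_0,\dots,v_n\in\mathbb{Q}^n$.

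For such a $\Delta$, I would lift each vertex to $(v_i,1)\in\mathbb{R}^{n+1}$ and form the cone $C=\mathrm{cone}\{(v_0,1),\dots,(v_n,1)\}$, so that the lattice points of $C$ at height $t$ are exactly those of $t\Delta$. Let $D$ be the least common multiple of the denominators of the coordinates of the $v_i$, and set $w_i=(Dv_i,D)\in\mathbb{Z}^{n+1}$, a lattice point on the $i$-th ray. Since $w_0,\dots,w_n$ are linearly independent, $C$ is tiled by the translates of the integer cone $\mathbb{Z}_{\ge 0}w_0+\dots+\mathbb{Z}_{\ge 0}w_n$ by the lattice points in the half-open fundamental parallelepiped $\Pi=\{\sum_i\lambda_i w_i:0\le\lambda_i<1\}$. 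Grading by the last coordinate, this disjoint decomposition gives
\be
\sum_{t\ge 0}L(\Delta,t)\,z^t \;=\; \frac{\sum_{p\in\Pi\cap\mathbb{Z}^{n+1}}z^{p_{n+1}}}{(1-z^{D})^{\,n+1}},
\ee
a rational function with polynomial numerator. Expanding by partial fractions, the coefficient of $z^t$ is, for every $t\ge 0$, a quasipolynomial in $t$ of degree $n$ whose coefficient functions are periodic with period dividing $D$; this yields the quasipolynomiality, and the leading coefficient is a genuine constant because $(1-z)$ divides the denominator to exact order $n+1$.

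It remains to identify that leading coefficient with $\mathrm{vol}(\Delta)$, and hence (summing over the triangulation) the leading term of $L(P,t)$ with $\mathrm{vol}(P)$. For this I would invoke the elementary limit $t^{-n}\,\#(tP\cap\mathbb{Z}^n)\to\mathrm{vol}(P)$ as $t\to\infty$: the left-hand side is a Riemann sum for the indicator function $\mathbf{1}_P$ over the grid of mesh $1/t$, so it converges to $\int_{\mathbb{R}^n}\mathbf{1}_P=\mathrm{vol}(P)$. Comparing this limit along any arithmetic progression on which $L(P,t)$ is polynomial forces its leading coefficient to equal $\mathrm{vol}(P)$. Finally, if $P$ is a lattice polytope one may take $D=1$ throughout; the denominator above becomes $(1-z)^{n+1}$, the coefficient functions become constants, and $L(P,t)$ is an honest polynomial in $t$.

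The step I expect to be the main obstacle is the passage from a general polytope to simplices: producing a triangulation compatible along faces and then accounting correctly for the lattice points on shared facets --- most transparently via a half-open decomposition rather than naive inclusion--exclusion --- together with verifying that the parallelepiped translates genuinely tile the cone $C$ without overlap, which is precisely where the exponent $D$ in the denominator $(1-z^{D})^{n+1}$ originates.
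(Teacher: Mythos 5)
The paper does not actually prove this statement: Theorem \ref{Ehrhart} is quoted as classical background, with the proof deferred to the literature (Ehrhart \cite{Eh}; the generating-function treatment appears in the cited book \cite{Be}). So there is no in-paper argument to compare against; what you have written is essentially the standard Stanley--Beck--Robins proof, and in outline it is correct: triangulate into rational simplices (handling overlaps by a half-open decomposition), cone over each simplex, tile the cone by translates of the fundamental parallelepiped of the lattice generators $w_i=(Dv_i,D)$, read off the rational generating function with denominator $(1-z^D)^{n+1}$, and extract a degree-$n$ quasipolynomial with period dividing $D$; the case $D=1$ gives the polynomial statement for lattice polytopes.

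One step in your write-up is stated too glibly, though the argument you give afterwards repairs it. You claim the leading coefficient ``is a genuine constant because $(1-z)$ divides the denominator to exact order $n+1$,'' but the denominator $(1-z^D)^{n+1}$ vanishes to order $n+1$ at \emph{every} $D$-th root of unity, so the partial-fraction expansion could a priori contribute terms of the form $c_\zeta\,\zeta^{-t}t^n$ from the other poles, making the $t^n$-coefficient periodic rather than constant; constancy requires either showing the numerator kills those top-order poles or an independent argument. Your Riemann-sum limit $t^{-n}\,|tP\cap\mathbb{Z}^n|\to\mathrm{vol}(P)$, applied separately on each residue class modulo $D$, does exactly this job: it forces the degree-$n$ coefficient of every constituent polynomial to equal $\mathrm{vol}(P)$, which simultaneously proves constancy and identifies the leading term with the volume. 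So the proof is sound provided you lean on the limit argument (not the pole-order remark) for that step, and provided the half-open triangulation bookkeeping you flag as the main obstacle is carried out, as in \cite{Be}.
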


\noindent Much work has been done since on the lattice point problem in polytopes. Applications of such counting problems occur in pure areas of math such as toric Hilbert functions, Kostant’s partition function in representation theory as well as in the applied aspects of cryptography, integer programming, contingency tables. A detailed view of these can be found in \cite{DeL}. Counting problems in vector-dilated polytopes are specifically used in a particular kind of code in the field of coding theory, illustrated in \cite{Lis}. Because of their property of filling any kind of spaces of different dimensions, irrational polytopes are used in the development of sustainable nanosystems.

More recently, Beck has studied the lattice point problem in vector-dilated rational polytopes \cite{Be}, and Borda similarly for scalar-dilated quadratic irrational polytopes \cite{Be}, also referred to as algebraic cross-polytopes. In this paper, we combine the work of Beck and Borda, namely, focusing on polytopes that have vertices that lie in a fixed quadratic extension $\mathbb Q(\sqrt D)$, and different dilation factors for each facet, instead of a common dilation for the entire polytope. We refer to Theorem \ref{main} for the statement of the main result.

For any general polytope in $n$ dimension, we prove that its counting polynomial behaves like an Ehrhart polynomial and satisfies a form of the Ehrhart Macdonald reciprocity law, that is,
\be\label{rec}
f(-t) = (-1)^n f(t).
\ee
Our method is a proceeds by induction, taking into account the polynomials defined in \cite{Bo} for irrational polytopes.

In Sect. \ref{quad}, we define quadratic irrational polytopes and their vector dilations. In Sect. \ref{thm} we state the main theorem for vector dilations of quadratic irrational polytopes, and provide the proof in Sect. \ref{proof}.

\section{Quadratic irrational polytopes}
\label{quad}

We first introduce the class of quadratic irrational polytopes by extending the definition \eqref{polyt} such that the coefficients of the linear inequalities determined by $A$ and $b$ are allowed to irrational.  

\begin{defi}[Quadratic irrational polytope] Consider the real quadratic field, $ \mathbb{Q}(\sqrt D)$, where $D>1$ is a square free integer. A polytope $P$ with vertices in $ \mathbb{Q}(\sqrt D)^d$ containing the origin forms the quadratic irrational class of polytopes.
\end{defi}

\begin{exa}
Consider the lattice counting problem in a regular dodecahedron. In dimension $d=3$, not every polytope can be decomposed into orthogonal simplices.  We cannot embed a regular dodecahedron in $\mathbb{R}^3$ such that all its vertices are lattice points. The faces of a regular dodecahedron are regular pentagons and there does not exist a pentagon in $\mathbb{Z}^d$ for any $d$. A standard regular dodecahedron has 20 vertices,
\be
(\pm 1, \pm 1 , \pm 1), (0, \pm \phi^{-1},
\pm \phi),(\pm \phi, 0,  \pm \phi^{-1}), \phi^{-1}, \pm \phi,0 ),
\ee
where $ \phi = \frac{1+ \sqrt5}{2}$ is the golden ratio. This means that there is no Ehrhart polynomial for such a polytope that gives the precise number of lattice points in its integral dilates. However, it can be embedded in such a way that all its vertices lie in $ \mathbb{Q}(\sqrt5)^3$.
\end{exa}

Instead of dilating the polytope by a single factor, we allow different dilation factors for each facet, such that the dilated polytope is combinatorially equivalent to the original polytope. Recall that two polytopes are said to be combinatorially equivalent if their face lattices are isomorphic.

\begin{defi}[Vector-dilated polytopes] For a polytope defined in \eqref{polyt} and $\textbf{t} \in \mathbb{Z}^m$, define the vector-dilated polytope as
\be
P^{(t)} = \{x \in \mathbb{R}^n: Ax \leq \textbf{t}\}.
\ee
For vectors {\bf t} for which $P^{(t)}$ is combinatorially equivalent to $P=P^{(b)}$, we may define the number of lattice points in the interior and closure of $P^{(t)}$ to be
\be
i_P(t) = |P^{(t)^0} \cap \mathbb{Z}^n|,
\ee
and 
\be
j_P(t) = |P^{(t)} \cap \mathbb{Z}^n|.
\ee
\end{defi}

The condition of combinatorial equivalence is necessary since different dilation factors can easily cause distortion of the polytope. It also matters how we define the polytope in terms of half spaces.

\begin{exa}
Observe the following example, consider the system of inequalities:
\be
P= \{  x \in \mathbb{R}^n : Ax \leq b\},
\ee
where 
\be
 A =  \begin{bmatrix}
   1 & 1 \\
   -1 & 0 \\
   0 & -1 \\
  \end{bmatrix} ,\quad   
  b = \begin{bmatrix}
  \sqrt{2} \\
  0 \\
  0 \\
  \end{bmatrix},\quad   
  t = \begin {bmatrix}
      4 \\
      1 \\
      3 \\
  \end{bmatrix} .
  \ee
The new system of equations for $P$ becomes 
  \begin{center}
      $ x+y \leq \sqrt2 $\\
      $ -x \leq 0 $ \\
      $-y \leq 0 $, \\
  \end{center}
and on dilation with $t \in \mathbb{Z}^3 $,
  \begin{center}
      $ x+y \leq 4 $\\
      $ -x \leq 1 $ \\
      $-y \leq 3 $, \\
  \end{center}
which are the lines, 
\be
x+y \leq 4,\quad x \geq -11,\quad y \geq -3 .
\ee
The intersection of these half spaces gives the desired dilated triangle. 
\end{exa}
  
Ehrhart's theorem (Theorem \ref{Ehrhart}) can be extended for a rational polytope considering vector dilations as done in \cite{Be}. Using induction on the dimension, Beck proves the following:
\begin{thm}[Beck]\label{Beck}
 Let $S$ be an $n$-dimensional rational simplex. Then $i_S(t$) and $j_S(t)$ are quasipolynomials in $t \in \mathbb{Z}^{n+1}$, satisfying $i_S(-t) = (-1)^n j_S(t)$.
\end{thm}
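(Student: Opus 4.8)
The plan is to prove the statement by induction on the dimension $n$, following the slicing strategy suggested by the phrase ``induction on the dimension.'' Write the simplex as $S = \{x \in \R^n : \langle a_i, x\rangle \le b_i,\ i=0,\dots,n\}$ with $a_i \in \Z^n$ and $b_i \in \Z$, and restrict attention to the chamber of parameters $\mathbf t \in \Z^{n+1}$ on which $S^{(\mathbf t)}$ is combinatorially equivalent to $S$. On this chamber the vertex $v_j(\mathbf t)$, cut out by the equations $\langle a_i, x\rangle = t_i$ for $i \ne j$, is a linear function of $\mathbf t$, while the directions of the edges emanating from $v_j$ depend only on the fixed normals $a_i$ and not on $\mathbf t$; this stability is what lets the induction run.

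For the base case, $n = 0$ is a point and trivial, while for $n = 1$ the polytope $S^{(\mathbf t)}$ is an interval whose two endpoints are linear in $\mathbf t \in \Z^2$. Both $i_S(\mathbf t)$ and $j_S(\mathbf t)$ are then given by explicit formulas involving the floor and ceiling of these endpoints, which are piecewise-linear quasipolynomials, and the identity $\lceil -x\rceil = -\lfloor x\rfloor$ yields $i_S(-\mathbf t) = -\,j_S(\mathbf t)$ at once.

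For the inductive step, fix the vertex $v = v_0(\mathbf t)$ opposite the facet $F_0 = \{\langle a_0, x\rangle = t_0\}$, so that $S^{(\mathbf t)}$ is the simplicial cone $v + \mathrm{cone}(u_1,\dots,u_n)$, with fixed edge directions $u_i$, truncated by $\{\langle a_0, x\rangle \le t_0\}$. Since $a_0 \in \Z^n$, the function $\langle a_0, \cdot\rangle$ takes integer values on $\Z^n$, so every lattice point of $S^{(\mathbf t)}$ lies on exactly one slicing hyperplane $\{\langle a_0, x\rangle = m\}$ with $m$ an integer in the range $\lceil \langle a_0, v(\mathbf t)\rangle\rceil \le m \le t_0$, giving
\be
j_S(\mathbf t) \;=\; \sum_{m \,=\, \lceil \langle a_0, v(\mathbf t)\rangle\rceil}^{t_0} \bigl|\{x \in \Z^n : \langle a_0, x\rangle = m\} \cap S^{(\mathbf t)}\bigr|,
\ee
and similarly for $i_S(\mathbf t)$ with strict inequalities, which shrinks the range to $\lfloor \langle a_0, v(\mathbf t)\rangle\rfloor + 1 \le m \le t_0 - 1$ and replaces each slice by its relative interior. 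Each slice with $m$ strictly between the two extremes is the transverse section of a simplicial cone, hence an $(n-1)$-dimensional simplex lying in the affine lattice $\{\langle a_0, x\rangle = m\}\cap\Z^n$, with facet normals inherited unchanged from $a_1,\dots,a_n$ and right-hand sides that are linear functions of $(\mathbf t, m)$. By the inductive hypothesis the lattice-point counts of the closure and of the relative interior of each slice are quasipolynomials in those right-hand sides; since quasipolynomials are preserved under integral linear substitution, these are quasipolynomials in $(\mathbf t, m)$. Finally, summing a quasipolynomial in $(\mathbf t, m)$ over $m$ in a range whose endpoints are ceilings or floors of linear functions of $\mathbf t$ again produces a quasipolynomial in $\mathbf t$, by Faulhaber-type summation identities with periodic coefficients; this shows $i_S$ and $j_S$ are quasipolynomials. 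For the reciprocity, substituting $\mathbf t \mapsto -\mathbf t$ negates $\langle a_0, v(\mathbf t)\rangle$ and the linear right-hand sides of the slices, reverses the direction of the $m$-sum, and — exactly as in the one-dimensional base case — converts the interior endpoints of the range into closed ones and vice versa; the single sign introduced by this reversal, multiplied by the $(-1)^{n-1}$ supplied by the inductive hypothesis in each slice, gives $i_S(-\mathbf t) = (-1)^n j_S(\mathbf t)$.

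The main obstacle is the boundary bookkeeping in the inductive step: correctly accounting for the two degenerate extreme slices (where the section collapses to the apex $v(\mathbf t)$ or to the facet $F_0$), keeping the open-versus-closed distinction consistent when passing between $i_S$ and $j_S$, and tracking the coset representative needed to identify the sublattice $\{\langle a_0, x\rangle = m\}\cap\Z^n$ uniformly in $m$ — it is this last point, together with the congruence condition for the slice to contain lattice points at all, that controls the periods of the resulting quasipolynomials. One must also verify that $\mathbf t$ remains in the combinatorial chamber throughout, since otherwise the vertex cone degenerates and the slices cease to be genuine $(n-1)$-simplices, breaking the induction.
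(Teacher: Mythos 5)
Your proposal is correct and follows essentially the same route as the paper's own (very brief) account of Beck's argument: induction on the dimension, isolating one facet/vertex to get one-dimensional integer bounds, writing the count as a sum of $(n-1)$-dimensional vector-dilated slice counts over those bounds, and deducing reciprocity from the floor/ceiling flip together with the $(-1)^{n-1}$ from the inductive hypothesis. You in fact supply more detail than the paper does (the paper only sketches this cited theorem), including the chamber, lattice-identification, and degenerate-slice issues that the formal sum-reversal bookkeeping must handle.
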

The proof of Beck's theorem goes roughly as follows: consider an $n$ dimensional rational simplex written as $n$ linear inequalities. Looking at how a single vertex undergoes dilation, one easily obtains a bound in one dimension. Assuming the induction hypothesis holds for all polytopes in $n-1$ dimension say $Q$, the polynomials of  $P$ can be written as a sum of the quasipolynomial of $Q$ over the bounds obtained previously. This implies that $P$ associates with quasi-polynomials and enables one to prove the Ehrhart-Macdonald reciprocity law.

In the following, we shall call a polynomial will be called {\em Ehrhart-like} or, say that it behaves like an Ehrhart polynomial if it satisfies the properties of an Ehrhart polynomial up to an error term.

\section{Statement of the main theorem}\label{thm}

Consider the two polytopes, for $d \geq 2$,  defined as
\be
\label{C}
C = \{ x \in \mathbb{R}^n : \dfrac{|x_1|}{a_1} + \dfrac{|x_2|}{a_2} + \dots + \dfrac{|x_d|}{a_n} \leq 1 \}    
\ee
and
\be
\label{S}
S = \{ x \in \mathbb{R}^n : \dfrac{x_1}{a_1} + \dfrac{x_2}{a_2} + \dots + \dfrac{x_d}{a_n} \leq 1 \}.
\ee
The vertices of $C$ are of the form $(0, \dots, \pm a_i, \dots ,0)$ and we assume that $ \frac{1}{a_1}, \frac{1}{a_2}, \dots , \frac{1}{a_d}$ are algebraic and linearly independent over $ \mathbb{Q}$. $S$, on the other hand always has the origin as a vertex and is a generalization of a right triangle in two dimensions. 

Another generalization of Theorem \ref{Ehrhart} has been done in \cite{Bo}. Borda uses Poisson summation formula to approximate the counting polynomial for an irrational polytope dilated by a scalar $t$ in all directions. For a polytope $C$ described in \eqref{C}, let $\chi_{tC}$ denote the characteristic function of $tC$ and 
\be
 \hat{\chi}_{tC} (y) = \int_{tC} e^{-2\pi i<x,y>}dx
 \ee
 denote its Fourier transform. The Cesaro mean of $tC$ is defined as \be
\text{Ces}(tC,N) = \dfrac{1}{N^d} \sum_{M \in [N-1]^d}  \sum_{m \in [-M_1,M_1] X \dots X [-M_d,M_d]} \hat{\chi}_{tC}(m). 
\ee
The Poisson summation formula leads to the estimate
\be 
|tC \cap \mathbb{Z}^d| =\text{Ces} (tC, N)+ O\left(1+ t^{d-1+ \epsilon} \sqrt{\dfrac{\log N}{N}}\right),
\ee
discussed in  \cite{Bo}. Now, $C$ can be triangulated into simplices. We observe a relation between the Fourier transforms and so, the Cesaro means of $C$ and its simplices. This leads to a complex integral that enables us to apply the residue theorem, the Cesaro mean can be written as an expression whose leading term is given in terms of the Reimann zeta function, which is the counting polynomial.

Namely, first let $a_1, \dots, a_d > 0$ and let $\zeta(s)$ denote the Reimann zeta function. Let 
\be
p(t) = p_{a_1, \dots, a_d}(t) = \sum_{k=0}^{d}c_k t^k
\ee
where
 \be
 c_d= \lambda(C)= \frac{2^d a_1, \dots, a_d}{d!}
 \ee
 and \be c_k= \frac{2^da_1, \dots, a_d}{(2\pi i)^{d-k} k!} \sum_{l=1}^{d} \sum_{1 \leq j_1< \dots < j_l \leq d} \sum_{ \substack{i_1+\dots+i_l=d-k \\ i_1,\dots,i_l \geq 2 \\ 2|i_1,\dots,i_l} } \frac{-2 \zeta(i_1)}{a_{j_1} ^{i_1}} \dots \frac{-2 \zeta(i_l)}{a_{j_l} ^{i_l}}.
 \ee
Secondly, let $a_1, \dots, a_d > 0$ and $p(t)$ defined as above. Then define
\be q(t)= q_{a_1, \dots, a_d}(t) = \frac{1}{2^d} \sum_{I \subseteq [d]} p_{a_i : i \in I}(t).
\ee
We can now state Borda's result for lattice points in scalar-dilations of quadratic irrational polytopes of the form $C$ and $S$.

\begin{thm}[Borda]\label{Borda} For the polytopes defined in \eqref{C} and \eqref{S}, considering any real $t>1$ and $ \epsilon>0 $, there exist computable polynomials, $p(t)$ and $q(t)$ such that
\begin{equation}
| tC \cap \mathbb{Z}^d| = p(t) + O \left(t^{\frac{(d-1)(d-2)}{ 2d-3} + \epsilon}\right)
\end{equation}
\begin{equation}
    : tS \cap \mathbb{Z}^d| = q(t) + O \left(t^{\frac{(d-1)(d-2)}{ 2d-3} + \epsilon} \right).
\end{equation}
We know that $p(t)$ and $q(t)$ are the leading terms of the Ehrhart polynomials associated with $C$ and $S$ and satisfy the reciprocity law \eqref{rec}
\end{thm}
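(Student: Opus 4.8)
The plan is to establish the estimate for the cross-polytope $C$ first and then deduce the one for $S$. For the reduction, note that $C$ is the union of the $2^d$ reflected copies of $S$ across the coordinate orthants, any two of which overlap only along coordinate hyperplanes; partitioning $\mathbb{Z}^d$ according to the support of a lattice point expresses $|tC\cap\mathbb{Z}^d|$ as a signed combination of the counts $|tS_I\cap\mathbb{Z}^I|$ of the lower-dimensional simplices $S_I=\{y\in\mathbb{R}_{\ge0}^{I}:\sum_{i\in I}y_i/a_i\le1\}$. Since the same relation holds in every dimension, inverting it over the subsets of $[d]$ yields exactly $q(t)=\tfrac1{2^d}\sum_{I\subseteq[d]}p_{a_i:i\in I}(t)$, together with a matching error term; so everything comes down to $C$.

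For $C$, one starts from the identity $|tC\cap\mathbb{Z}^d|=\text{Ces}(tC,N)+O\bigl(1+t^{d-1+\epsilon}\sqrt{\log N/N}\bigr)$ quoted above, which is the Ces\`aro-regularized Poisson summation formula. Triangulate $C$ into simplices sharing the origin as a common vertex; the Fourier transform of the indicator function of a simplex is an explicit rational function of the frequency variable, with poles on the hyperplanes spanned by its edges, so $\widehat{\chi}_{tC}(y)$ is an explicit rational function that, because the vertices of $C$ come in pairs $\pm a_ie_i$, is symmetric under every sign change $y_i\mapsto-y_i$. In the Ces\`aro sum, isolate the term $m=0$, which contributes the volume $\lambda(C)\,t^d=c_dt^d$, and group the remaining terms according to the support $J=\{i:m_i\ne0\}$ of the frequency $m$.

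For a fixed support $J$ one must evaluate $\sum_{m_j\in\mathbb{Z}\setminus\{0\},\,j\in J}\widehat{\chi}_{tC}(m)$. Here the hypothesis that $1/a_1,\dots,1/a_d$ are algebraic and $\mathbb{Q}$-linearly independent enters twice: it guarantees that $\sum_{j\in J}m_j/a_j\ne0$, so no spurious pole of $\widehat{\chi}_{tC}$ is hit, and, via Roth's theorem, it bounds $\bigl|\sum_{j\in J}m_j/a_j-n\bigr|$ below by a negative power of $\max_j|m_j|$, which controls the tails of these sums. Each one-variable sum $\sum_{m_j\ne0}(\cdots)$ is then evaluated by writing it as a contour integral against the kernel $\pi\cot(\pi z)$ and displacing the contour across the pole of $\widehat{\chi}_{tC}$ at the origin; the residue there produces the factors $-2\zeta(i_j)/a_j^{i_j}$, and the sign-symmetry of $C$ forces every contribution with an odd index $i_j$ to cancel in pairs. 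Summing the surviving residues over all subsets $J$ and all compositions $i_1+\dots+i_l=d-k$ with each $i_j$ even reproduces precisely the coefficients $c_k$, so $\text{Ces}(tC,N)=p(t)+(\text{a tail error})$.

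The crux is the final error bookkeeping: one must combine the Poisson error $t^{d-1+\epsilon}\sqrt{\log N/N}$ with the error from truncating the Ces\`aro average at level $N$ and from the tails of the frequency sums — the latter controlled by the effective irrationality bound above and by a direct estimate for the number of integer points lying within distance $O(1)$ of $\partial(tC)$ — and then optimize $N$ as a function of $t$. Carrying this out is what produces the exponent $\tfrac{(d-1)(d-2)}{2d-3}+\epsilon$, and it is the step I expect to be the main obstacle to a clean argument. Finally the reciprocity law is read off at once: the constraints $i_1+\dots+i_l=d-k$ with every $i_j$ even force $c_k=0$ whenever $d-k$ is odd, so $p(-t)=(-1)^dp(t)$; and the corresponding Ehrhart--Macdonald reciprocity for $S$, relating its interior and closure counts, then follows from the subset decomposition relating $C$ and $S$ used in the first step.
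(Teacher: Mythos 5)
This statement is Borda's theorem, which the paper imports from \cite{Bo} without proof, accompanying it only with a sketch (Ces\`aro-regularized Poisson summation, triangulation of $C$ into simplices, a contour/residue computation producing the even zeta values $\zeta(i_j)$, and the coefficient formula for $c_k$); your proposal reconstructs exactly that route, including the reduction between $C$ and $S$ encoded in $q(t)=\frac{1}{2^d}\sum_{I\subseteq[d]}p_{a_i:i\in I}(t)$ and the parity argument giving $p(-t)=(-1)^dp(t)$, so in approach you are aligned with the paper (and with Borda). Two caveats: the step you flag as the ``main obstacle'' --- optimizing $N$ against $t$ and controlling the frequency-sum tails to obtain the exponent $\frac{(d-1)(d-2)}{2d-3}+\epsilon$ --- is precisely the substantive content of Borda's theorem, so as written your argument is an outline rather than a proof of the stated error term; and for $d\ge 3$ the Diophantine input needed to bound $\bigl|\sum_{j\in J}m_j/a_j\bigr|$ from below is a linear-forms/simultaneous-approximation theorem of Schmidt type for algebraic numbers, not Roth's theorem alone, which only handles a single algebraic irrational.
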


We can now state the main result of this paper. 

\begin{thm}\label{main}
Let $P$ be a polytope in $\mathbb{R}^n$ and $D>1$ be a square free integer. Suppose every coordinate of every vertex of $P$ is in $\mathbb{Q}(\sqrt D)$. Let $\textbf{t}= (t_1, \dots , t_n)$ be an $n$ dimensional dilation vector applied to $P$ that gives $\textbf{t}P$. Let $t_{\textnormal{max}} = \max_{1\le i \le n}(t_i)$. Then
\be
|\textbf{t}P \cap \mathbb{Z}^n| = \sum_{i=1}^{2^{n+1}} q(t) + O \left(t_\textnormal{max}^{\frac{(d-1)(d-2)}{2d-3}} \right).
\ee
The leading term of each component of the polynomial $|\textbf{t}P \cap \mathbb{Z}^n|$ satisfies the reciprocity law as in \eqref{rec} and hence, is said to behave like an Ehrhart Polynomial. 
\end{thm}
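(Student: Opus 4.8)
The plan is to proceed by induction on the dimension $n$, mirroring Beck's inductive strategy for rational simplices (Theorem \ref{Beck}) but feeding in Borda's approximation (Theorem \ref{Borda}) in place of the exact Ehrhart quasipolynomial at the base of the argument. First I would triangulate $P$ into simplices whose vertices still lie in $\mathbb{Q}(\sqrt D)^n$; since $P$ contains the origin, one may cone from $0$ over each facet, so that each simplex $S_j$ has $0$ as a vertex and is of the combinatorial type of the polytope $S$ in \eqref{S} (after an orthant decomposition, the type $C$ in \eqref{C}). By inclusion–exclusion on the triangulation, $|\textbf{t}P \cap \mathbb{Z}^n|$ is an alternating sum of lattice-point counts $|\textbf{t}S_j \cap \mathbb{Z}^n|$ over lower-dimensional overlaps, which is where the factor $2^{n+1}$ and the sum $\sum_{i=1}^{2^{n+1}} q(t)$ come from: each orthant of each coned simplex contributes one copy of a $q$-polynomial built from the relevant $a_i$'s.

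For the base case and the reduction step, I would fix one simplex $S_j$ written as $n$ affine inequalities with $\mathbb{Q}(\sqrt D)$-coefficients, and slice it by the hyperplanes $x_n = k$ for $k \in \mathbb{Z}$. Tracking how the vertex opposite the facet $\{x_n = 0\}$ moves under $\textbf{t}$ gives, exactly as in Beck's proof, an integer interval $[\alpha(\textbf{t}), \beta(\textbf{t})]$ of admissible values of $k$, with endpoints that are quadratic-irrational linear functions of the $t_i$ up to bounded error. For each fixed $k$ the slice is an $(n-1)$-dimensional polytope of the same combinatorial type whose defining data again lie in $\mathbb{Q}(\sqrt D)$, so the induction hypothesis applies and produces a $q$-type leading term plus an error of size $O\!\left(t_{\max}^{(d-1)(d-2)/(2d-3)}\right)$. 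Summing the $(n-1)$-dimensional leading terms over $k$ in the interval $[\alpha(\textbf{t}), \beta(\textbf{t})]$ turns a degree $n-1$ quasipolynomial into a degree $n$ one by the Faulhaber/Bernoulli summation identities, and the accumulated error is at most the number of slices times the slice error, which stays within the stated bound since the interval length is $O(t_{\max})$ and $(d-1)(d-2)/(2d-3)$ grows.

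The reciprocity law $f(-\textbf{t}) = (-1)^n f(\textbf{t})$ for the leading term I would obtain the same way Beck does: the summation operator $\sum_{k=\alpha}^{\beta}$ applied to a polynomial satisfies a reflection identity (the discrete analogue of $\int_{-\beta}^{-\alpha} = -\int_{\alpha}^{\beta}$) that converts the $(n-1)$-dimensional reciprocity, valid by induction and by Borda's statement in Theorem \ref{Borda}, into the $n$-dimensional one, the extra sign coming from the reversal of the slicing interval when $\textbf{t} \mapsto -\textbf{t}$; crucially the error terms are of strictly lower order than the leading term, so they do not interfere with the sign identity at top degree. The main obstacle is bookkeeping the error terms through the induction: Borda's exponent $(d-1)(d-2)/(2d-3)$ is stated for scalar dilations of a single irrational cross-polytope, and one must check that replacing scalar $t$ by the vector $\textbf{t}$ and bounding everything by $t_{\max}$ does not degrade it, and that summing $O(t_{\max})$ slice-errors of the $(n-1)$-dimensional bound still fits under the $n$-dimensional bound — this requires the inequality $1 + \tfrac{(d-2)(d-3)}{2d-5} \le \tfrac{(d-1)(d-2)}{2d-3}$ (or an $\epsilon$-weakened version of it) to hold, which should be verified carefully as the one genuinely non-formal point of the argument.
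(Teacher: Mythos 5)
Your overall route --- coning/orthant decomposition into pieces of type \eqref{S}, Borda's theorem at the base, slicing by hyperplanes $x_n = k$ and inducting on the dimension, reciprocity obtained by reflecting the summation interval --- is essentially the same as the paper's proof, which likewise decomposes $C$ into $2^{n+1}$ orthant pieces $S_i$, applies Theorem \ref{Borda} with dilation $t_i$, and in its inductive step sums $n$-dimensional counts over integer slices $x_{n+1}=0,\dots,\lfloor\cdot\rfloor$. However, the one point you flag as ``to be verified carefully'' is a genuine gap, and the needed inequality in fact fails. Writing $f(d)=\frac{(d-1)(d-2)}{2d-3}$, one computes $f(d)-f(d-1)=\frac{2(d-2)^2}{(2d-3)(2d-5)}$, which is strictly less than $1$ for every $d\ge 3$; equivalently $1+f(d-1)>f(d)$ (already at $d=3$: $1+0=1>\tfrac23$), and the defect is a fixed positive constant, so no $\epsilon$-weakening rescues it. Hence summing the $O(t_{\max})$ slice errors of size $O\bigl(t_{\max}^{f(d-1)}\bigr)$ produces an accumulated error of order $t_{\max}^{\,1+f(d-1)}$, strictly larger than the claimed $t_{\max}^{f(d)}$ in every dimension $d\ge 3$: the slicing induction cannot reproduce Borda's exponent. (The paper's own inductive step sums the slice errors in exactly the same way without checking this, so it shares the problem; your proposal at least makes the required inequality explicit.)

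The natural repair is to avoid slicing altogether. Under a vector dilation, each orthant piece of $\mathbf{t}C$ is the \emph{scalar} dilate $t_iS_i$ of a fixed full-dimensional quadratic irrational simplex $S_i$, so Theorem \ref{Borda} applies directly in dimension $d$ with scalar parameter $t_i$; the $2^d$ errors then simply add, giving $O\bigl(t_{\max}^{f(d)}\bigr)$ with no extra factor of $t_{\max}$, and the leading-term reciprocity is inherited term by term from Borda's statement. This is exactly what the paper's two-dimensional computation does, and it generalizes verbatim to all dimensions, rendering the dimension induction (yours and the paper's) unnecessary for polytopes of type $C$. If you rewrite along these lines, two bookkeeping issues you raise still need real care: the orthant pieces overlap along coordinate hyperplanes containing on the order of $t_{\max}^{\,d-1}$ lattice points, which dwarfs $t_{\max}^{f(d)}$, so your inclusion--exclusion must be carried out exactly, using the lower-dimensional $p$-polynomials (this is what the definition $q=2^{-d}\sum_{I\subseteq[d]}p_{a_i:i\in I}$ is built for), not absorbed into the error term; and for a general $P$ the coned simplices need not have the special form \eqref{S} with $1/a_1,\dots,1/a_d$ linearly independent over $\mathbb{Q}$, so Borda's theorem does not apply to them as stated --- a point the paper also leaves unresolved by delegating the triangulation step to the reader.
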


Before proceeding to the proof, we first prove a lemma that will be useful for the main theorem.

\begin{lem}
Let $C$ and $S$ be defined as in \eqref{C} and \eqref{S} respectively. Then every polytope of the form $C$ can be decomposed into components expressed in the form of polytopes of the form $S$. 
 \end{lem}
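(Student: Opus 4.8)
The plan is to realize $C$ as a polyhedral subdivision whose pieces are reflected copies of $S$, and then to transfer this to a lattice-point identity by inclusion--exclusion over coordinate hyperplanes. Although \eqref{S} is written as an unbounded half-space, the relevant object (the stated ``generalization of a right triangle'') is the bounded simplex
\be
S = \Bigl\{ x \in \mathbb{R}^d : x_i \ge 0 \ (1\le i\le d),\ \tfrac{x_1}{a_1}+\dots+\tfrac{x_d}{a_d} \le 1 \Bigr\},
\ee
with vertices $0, a_1 e_1, \dots, a_d e_d$; I work with this $S$. For a sign vector $\varepsilon=(\varepsilon_1,\dots,\varepsilon_d)\in\{\pm1\}^d$ let $R_\varepsilon(x_1,\dots,x_d)=(\varepsilon_1 x_1,\dots,\varepsilon_d x_d)$ and set $S_\varepsilon=R_\varepsilon(S)$. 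Since $|\varepsilon_i x_i|=|x_i|$, a point $x$ lies in $C$ if and only if it lies in $S_\varepsilon$ for any $\varepsilon$ with $\varepsilon_i=\operatorname{sgn}(x_i)$ whenever $x_i\neq 0$, so
\be
C = \bigcup_{\varepsilon\in\{\pm1\}^d} S_\varepsilon .
\ee
Each $S_\varepsilon$ is again a polytope of the form $S$ with the same positive parameters $a_i$ (after the coordinate reflection), which is the asserted decomposition.

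Next I would check that this is an honest subdivision. Distinct $\varepsilon,\varepsilon'$ differ in some coordinate $i$; since $S_\varepsilon\subseteq\{\varepsilon_i x_i\ge0\}$ and $S_{\varepsilon'}\subseteq\{\varepsilon'_i x_i\ge0\}$, the interiors are disjoint and $S_\varepsilon\cap S_{\varepsilon'}$ lies in the coordinate subspace $\{x_i=0:\varepsilon_i\neq\varepsilon'_i\}$, where the common face is precisely a reflected copy of the lower-dimensional simplex $S$ built from the surviving parameters. This recursive structure is what supports the induction on $d$ used for the main theorem, and it is the main technical point: one must confirm that the pieces meet along common faces of the cross-polytope (not merely in their boundaries) and keep track of exactly which lower-dimensional copies of $S$ appear.

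Finally, to pass to lattice points, note that each $R_\varepsilon$ lies in the hyperoctahedral group, a subgroup of $GL_d(\mathbb{Z})$, so $R_\varepsilon(\mathbb{Z}^d)=\mathbb{Z}^d$ and $|S_\varepsilon\cap\mathbb{Z}^d|$ depends only on the parameters $a_i$. Applying inclusion--exclusion to $C=\bigcup_\varepsilon S_\varepsilon$, with each overlap being a cross-polytope in fewer variables, and iterating the decomposition, collapses the $2^d$ terms into the relation $q_{a_1,\dots,a_d}(t)=2^{-d}\sum_{I\subseteq[d]}p_{a_i:i\in I}(t)$ recorded before Theorem \ref{Borda}; this is the precise sense in which ``$C$ decomposes into $S$'s.'' I expect the only real obstacle to be this inclusion--exclusion bookkeeping: ensuring each lower-dimensional face is counted with the correct multiplicity so that the signs and the normalizing factor $2^{-d}$ come out exactly as in Borda's identity.
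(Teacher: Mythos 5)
Your proposal is correct and uses essentially the same decomposition as the paper: splitting $C$ by sign patterns into the $2^{d}$ reflected copies of the simplex $S$, one per hyperoctant. In fact your write-up is tighter than the paper's, since you correctly take $S$ to be the bounded simplex with the nonnegativity constraints (the paper's \eqref{S} omits them, and its pieces, read literally as half-spaces, would union to all of $\mathbb{R}^{n}$ rather than to $C$), and you additionally verify that the pieces have disjoint interiors and meet along lower-dimensional reflected simplices, a point the paper leaves implicit.
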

\begin{proof} Let us proceed using induction on the dimension of $C$. Consider a two dimensional polytope, 
\be
C_2 = \{ x \in \mathbb{R}^2 : \dfrac{|x_1|}{a_1} + \dfrac{|x_2|}{a_2} \leq 1 \}.
\ee
 In every quadrant of the $x_1x_2$ plane, the absolute values of the $x_i$'s change signs. This makes it clear that $C$ is the union of 
\begin{align}
S_1 &= \{ x \in \mathbb{R}^2 : \dfrac{x_1}{a_1} + \dfrac{x_2}{a_2} \leq 1 \}\\
S_2 &= \{ x \in \mathbb{R}^2 : \dfrac{-x_1}{a_1} + \dfrac{x_2}{a_2} \leq 1 \}\notag\\
S_3 &= \{ x \in \mathbb{R}^2 : \dfrac{x_1}{a_1} + \dfrac{-x_2}{a_2} \leq 1 \}\notag\\
S_4 &= \{ x \in \mathbb{R}^2 : \dfrac{-x_1}{a_1} + \dfrac{-x_2}{a_2} \leq 1 \}.\notag
\end{align}

Next, assuming that the lemma holds for dimension $n$, consider an $n+1$ dimensional polytope, 
\be
C= \{ x \in \mathbb{R}^n : \dfrac{|x_1|}{a_1} + \dfrac{|x_2|}{a_2} + \dots + \dfrac{|x_n|}{a_n} \leq 1 \}.
\ee
Now since we have $n+1$ variables, and each term in $C$ can either be positive or negative, we get $2^{n+1}$ combinations of the $x_i$, all distinct. Putting into equations,
\begin{align}
\dfrac{x_1}{a_1}+ \dfrac{x_2}{a_2}+ \dots + \dfrac{x_{n+1}}{a_{n+1}} &\leq 1 \\
\dfrac{-x_1}{a_1}+ \dfrac{x_2}{a_2}+ \dots + \dfrac{x_{n+1}}{a_{n+1}} &\leq 1\notag\\
 \dfrac{x_1}{a_1}+ \dfrac{-x_2}{a_2}+ \dots + \dfrac{x_{n+1}}{a_{n+1}} &\leq 1 \notag\\
\dfrac{x_1}{a_1}+ \dfrac{x_2}{a_2}+ \dots + \dfrac{-x_{n+1}}{a_{n+1}} &\leq 1 \notag\\
\vdots\notag\\
 \dfrac{-x_1}{a_1}+ \dfrac{-x_2}{a_2}+ \dots + \dfrac{-x_{n+1}}{a_{n+1}} &\leq 1.\notag
\end{align}
Each of these combinations is geometrically,  a polytope of the type $S$ in each of the hyperoctants. Hence, the result follows.
\end{proof}

\section{Proof of Theorem \ref{main}}
\label{proof}

We want to prove that every $n$ dimensional quadratic irrational polytope has a counting polynomial that shows Ehrhart-polynomial like properties.
The proof follows by induction on the dimension $n$.

First consider a one dimensional simplex $S$, which is an interval with irrational endpoints. The lattice count is computed using the integer function. The interval under consideration is  $\left[\dfrac{1}{a_1}, \dfrac{1}{a_2} \right]$, where $a_1, a_2$ are quadratic irrationals. For $\textbf{t} = (t_1,t_2) \in \mathbb{Z}^2$, $S^{(t)}$ is given by the end points,
\be
\dfrac{t_1}{a_1} \leq x \leq \dfrac{t_1}{a_1}.
\ee
So that we obtain,
\be
j_s(\textbf{t})=\left| \left[\dfrac{t_2}{a_2} \right]- \left[\dfrac{t_1}{a_1} \right]\right|.
\ee
It is easy to see here that 
\be
j_s(-\textbf{t}) = \left|\left[\dfrac{-t_2}{a_2}\right]- \left[\dfrac{-t_1}{a_1} \right] \right| = \left|-\left( \left[\dfrac{-t_2}{a_2}\right]- \left[\dfrac{-t_1}{a_1} \right] \right)\right|= - j_s(\textbf{t}).
\ee
This proves the reciprocity law in one dimension for quadratic irrationals.

One can also consider a two dimensional simplex $C$ with vector dilations of $\textbf{t} =(t_1,t_2,t_3,t_4)$,
\be
C= \{ x \in \mathbb{R}^2 : \dfrac{|x_1|}{a_1}+ \dfrac{|x_2|}{a_2} \leq \textbf{t} \}.
\ee
The coefficient matrix obtained for this polytope in two dimension is
\be
\begin{bmatrix}
    \dfrac{1}{a_1} & \dfrac{1}{a_2}   \\
    \dfrac{-1}{a_1} & \dfrac{1}{a_2}  \\
    \dfrac{-1}{a_1} & \dfrac{-1}{a_2}  \\
    \dfrac{1}{a_1} & \dfrac{-1}{a_2}  
\end{bmatrix}
\begin{bmatrix}
    x_{1} &   \\
    x_{2} & 
\end{bmatrix}
\leq
\begin{bmatrix}
    t_{1} &   \\
    t_{2} & \\
    t_{3}  & \\
    t_{4} &
\end{bmatrix}.
\ee
This can be written as a system of inequalities
\begin{align}
    \dfrac{x_1}{a_1}+ \dfrac{x_2}{a_2} \leq t_1 \\
    \dfrac{-x_1}{a_1}+ \dfrac{x_2}{a_2} \leq t_2 \notag\\
    \dfrac{-x_1}{a_1}- \dfrac{x_2}{a_2} \leq t_3 \notag\\
    \dfrac{x_1}{a_1}- \dfrac{x_2}{a_2} \leq t_4.\notag
\end{align}
Each of these inequalities is a plane along every axis as $S_i$,
\begin{equation}
  S_1= \{ x \in \mathbb{R}^2 : \dfrac{x_1}{a_1}+ \dfrac{x_2}{a_2} \leq t_1 \}  
\end{equation}\be S_2= \{ x \in \mathbb{R}^2 : \dfrac{-x_1}{a_1}+ \dfrac{x_2}{a_2} \leq t_2 \}\notag
\ee
\be
S_3= \{ x \in \mathbb{R}^2 : \dfrac{-x_1}{a_1}+ \dfrac{-x_2}{a_2} \leq t_3 \}\notag
\ee
\be
S_4= \{ x \in \mathbb{R}^2 : \dfrac{x_1}{a_1}+ \dfrac{-x_2}{a_2} \leq t_4 \}.\notag
\ee
Each of these $S_i$'s have a separate dilation factors, as seen from the inequalities.  From Theorem \ref{Borda}, for each $i$, we have the result
\be
|t_i S_i \cap \mathbb{Z}^2|= q(t_i)+ O \left( t_i^{\frac{(d-1)(d-2)}{2d-3}} \right).
\ee

Sum this over all values of $i$ from 1 to 4 to get $C$, while considering the error term for the maximum of all $t_i$'s. The lattice points in $C$ for $\textbf{t}= (t_1,...t_4)$ are given by 
\begin{align}
 |\textbf{t}C \cap \mathbb{Z}^2|&= \sum_{i=1}^{4} |t_i S_i \cap \mathbb{Z}^2|\\
 &= \sum_{i=1}^{4} q(t_i)+ O \left( t_{\text{max}}^{\frac{(d-1)(d-2)}{2d-3}} \right)
\end{align}
where $t_{\text{max}} = \max(t_1,t_2,t_3,t_4)$. Since the leading term $q(t_i)$ for each $i$ satisfies a form of the Ehrhart-Macdonald reciprocity law, $q(-t_i)= (-1)^2 q(t_i)$, the sum of all $q$'s also satisfies the reciprocity law. This shows that $q$ is not a quasi-polynomial but has properties resembling Ehrhart polynomials that are a wider class of polynomials.
The induction hypothesis holds true for $n=2$.

Assuming that the hypothesis is true for all $n$ dimensional quadratic irrational polytopes, the counting polynomial $q$ for all such $n$ dimensional polytopes satisfied the reciprocity law and thus, showed Ehrhart polynomial-like properties. Consider the $n+1$ dimensional polytope
\be
C = \{ x \in \mathbb{R}^{n+1} : \dfrac{|x_1|}{a_1}+ \dfrac{|x_2|}{a_2}+ \dots + \dfrac{|x_{n+1}|}{a_{n+1}} \leq 1 \}
\ee
This gives the following system of inequalities,
\be
\begin{bmatrix}
    \dfrac{1}{a_1} & \dfrac{1}{a_2} & \dots & \dfrac{1}{a_{n+1}}   \\
    \dfrac{-1}{a_1} & \dfrac{1}{a_2} & \dots & \dfrac{1}{a_{n+1}}  \\
    \vdots & \vdots & \vdots & \vdots \\
    \dfrac{1}{a_1} & \dfrac{1}{a_2} & \dots & \dfrac{-1}{a_{n+1}} \\ 
\end{bmatrix}
\begin{bmatrix}
    x_{1} &   \\
    x_{2} &    \\
    \vdots & \\
    x_{n+1} &    \\
\end{bmatrix}
\leq
\begin{bmatrix}
    t_{1} &   \\
    t_{2} & \\
    \vdots & \\
    t_{2^{n+1}}  & \\
\end{bmatrix}.
\ee
Each inequality above is a single face of the polytope in each of the $2^{n+1}$ hyperoctants.

Now, lets look at each $S_i$ individually. By symmetry, it suffices to consider the positive direction of the $(n+1)$-th axis, we take the first $2^n$ components $S_i$ of $C$. 
For each $S_i$, the cross-section at $x_{n+1}=0$ is 
\be
\{ x \in \mathbb{R}^n :  \dfrac{x_1}{a_1}+ \dots + \dfrac{x_n}{a_n} \leq t_i\}
\ee
which is the corresponding $S_i$ in $n$-dimensions. For every subsequent lattice point on the $(n+1)$-th axis, 
\be
S_i= \{ x \in \mathbb{R}^n :  \dfrac{x_1}{a_1}+ \dots + \dfrac{x_n}{a_n} \leq t_i- x_{n+1}\}
\ee
 for $x_{n+1}$ running from 0 to $ \lfloor{a_{n+1}(c_1t_1+ \dots + c_nt_n)} \rfloor$. The counting polynomial then becomes 
\begin{multline}
\sum_{x_{n+1}= 0}^{ \left \lfloor{a_{n+1}(c_1t_1+ \dots +c_nt_n)}\right \rfloor}|(t_i-x_{n+1}) S_i \cap \mathbb{Z}^{n+1}|\\
=\sum_{x_{n+1}= 0}^{  \left \lfloor{a_{n+1}(c_1t_1+ \dots +c_nt_n)}\right \rfloor }  q(t_i-x_{n+1})+ O \left( (t_i-x_{n+1})^{\frac{(d-1)(d-2)}{2d-3}} \right).
\end{multline}
The $n$-dimensional polynomials, $q(t-x_{n+1})$ satisfy the reciprocity law. Therefore, 
\begin{multline}
|\textbf{t}C \cap \mathbb{Z}^{n+1}| 
\sum_{i=1}^{2^{n+1}} |(t_i-x_{n+1})S_i\cap \mathbb{Z}^{n+1}| \\
=\sum_{i=1}^{2^{n+1}}\sum_{x_{n+1}= 0}^{  \left \lfloor{a_{n+1}(c_1t_1+ \dots +c_nt_n)}\right \rfloor }  q(t_i-x_{n+1})+ O \left( (t_i-x_{n+1})^{\frac{(d-1)(d-2)}{2d-3}} \right) .
\end{multline}
By the induction hypothesis, the counting polynomial for $(n+1)$-dimensional polytope that looks like $C$ is Ehrhart-like, thus it satisfies a form of the reciprocity law.

In a more general setting, $P$ can be of any kind and not necessarily resemble $C$. Then, $P$ can be triangulated in a number of simplices that look like $C$. This just adds another layer to the proof in saying that the polynomial for $P$ can be expressed in terms of the polynomial $|\textbf{t}C \cap \mathbb{Z}^n|$ which in turn is broken down into simpler components, and is left to the reader. Therefore, Theorem \ref{main} holds for all polytopes $P$ that satisfy the hypothesis.


\begin{thebibliography}{}

\bibitem[Be]{Be} Beck, M., \& Robins, S. (2016). Computing the continuous discretely. [S.l.]: Springer-Verlag New York.

\bibitem[Bo]{Bo} Borda, B. (2017). Lattice Points in Algebraic Cross-polytopes and Simplices. Discrete \& Computational Geometry, 60(1), 145-169. doi: 10.1007/s00454-017-9946-z

\bibitem[DeL]{DeL} De Loera, J. (2005). The many aspects of counting lattice points in polytopes. Mathematische Semesterberichte, 52(2), 175-195. doi: 10.1007/s00591-005-0094-9

\bibitem[Eh]{Eh}Ehrhart, E. \textit{Sur un probl\'eme de g\'eom\'etrie diophantienne lin\'eaire II}. J. reine angewandte Math. 227 (1967): 25-49. 

\bibitem[Lis]{Lis} Lisoněk, P. (2007). Combinatorial families enumerated by quasi-polynomials. Journal Of Combinatorial Theory, Series A, 114(4), 619-630. doi: 10.1016/j.jcta.2006.06.013
\end{thebibliography}
\end{document}